\documentclass[11pt]{article}

\usepackage{amsmath, amssymb, latexsym, amsthm}
\usepackage{hyperref}
\usepackage{xcolor}
\usepackage[final]{changes}

\newtheorem{theorem}{Theorem} 
\newtheorem{prop}[theorem]{Proposition}
\newtheorem{cor}[theorem]{Corollary}

\newtheorem{lemma}{Lemma}
\newtheorem{define}{Definition}

\newcommand{\eps}{\varepsilon}

\newcommand{\cB}{\mathcal{B}}
\newcommand{\Nat}{\mathbb{N}}
\newcommand{\vol}{\mathrm{vol}}

\DeclareMathOperator\disp{disp}
\begin{document}

\title{Lower bounds on the minimal dispersion of point sets via cover-free families\footnotetext{
The work of the first and the third author has been supported by the grant P202/23/04720S of the Grant Agency of the Czech Republic.
The work of the second author has been supported by the grant 23-06815M of the Grant Agency of the Czech Republic.
}}

\author{M. Tr\"odler\thanks{Department of Mathematics, Faculty of Nuclear Sciences and Physical Engineering,
Czech Technical University in Prague, Trojanova 13, 12000 Prague, Czech Republic.
E-mail: \href{mailto:trodlmat@fjfi.cvut.cz}{\hbox{trodlmat@fjfi.cvut.cz}}},
\and J. Volec\thanks{
Department of Theoretical Computer Science, Faculty of Information Technology, Czech Technical University in Prague, Th\'akurova 9, Prague, 160 00, Czech Republic.
E-mail: \href{mailto:jan@ucw.cz}{jan@ucw.cz}},
\and and J. Vyb\'\i ral\thanks{%Corresponding author.
Department of Mathematics, Faculty of Nuclear Sciences and Physical Engineering,
Czech Technical University in Prague, Trojanova 13, 12000 Prague, Czech Republic.
E-mail: \href{mailto:jan.vybiral@fjfi.cvut.cz}{\hbox{jan.vybiral@fjfi.cvut.cz}}}
}
\maketitle

\begin{abstract}
We elaborate on the intimate connection between the largest volume of an empty axis-parallel box in a set of $n$ points from $[0,1]^d$ and cover-free families from the extremal set theory.
This connection was discovered in a recent paper of the authors.
In this work, we apply a very recent result of Michel and Scott to obtain a whole range of new lower bounds on the number of points needed
so that the largest volume of such a box is bounded by a given $\eps$.
Surprisingly, it turns out that for each of the new bounds, there is a choice of the parameters $d$ and $\eps$ such that the bound outperforms the others.
\end{abstract}

\section{Introduction}

Let $X\subset[0,1]^d$ be a (finite) set of points.
There are several ways of how to measure whether the points of $X$ are well spread.
One way, which has recently attracted the attention of many researchers, is the so-called \emph{dispersion}.
The dispersion of $X$ is the volume of the largest axis-parallel box in $[0,1]^d$ that contains no point of $X$, i.e.,
\begin{equation}\label{eq:def_disp}
\disp(X):=\sup_{B:B\cap X=\emptyset} \added{\vol(B)}\text{\deleted{$|B|$}}.
\end{equation}
Here, the supremum is taken over all the boxes $B=\prod_{i=1}^d (a_i,b_i)$, where $0\le a_i<b_i\le 1$ for all $i\in[d],$ and $\added{\vol(B)}$
\deleted{$|B|$}
stands for the (Lebesgue) volume of $B$.
The study of this notion goes back to \cite{Hlawka, Nieder} and \cite{RT96}.

A very natural extremal problem is to determine the smallest possible dispersion for a given number of points in $[0,1]^d$.
For fixed integers $d$ and $n$, we denote the minimum dispersion of an $n$-point set $X\subseteq [0,1]^d$ by
\begin{equation}\label{eq:def_disp_nd}
\disp^*(n,d):=\inf_{\substack{X\subset [0,1]^d\\|X|=n}}\disp(X).
\end{equation}

It is sometimes convenient to study this problem in the inverse setting:
given an integer $d$ and $\eps\in (0,1)$, how many points do we need to place into the $d$-dimensional unit cube (and how?) so that their dispersion is at~most~$\eps$?
%For a fixed $d \in \Nat$ and $\eps\in(0,1)$, we define the inverse function of the minimal dispersion in $[0,1]^d$ by
The extremal problem is reflected by the quantity
\begin{align}\label{eq:def_N}
N(\varepsilon,d)&:=\min\{n\in\Nat: \disp^*(n,d)\le \varepsilon\}\\
\notag &=\min\{n\in\Nat: \exists X\subset [0,1]^d\ \text{with}\ |X|=n\ \text{and}\ \disp(X)\le\varepsilon\}.
\end{align}

\medskip

As mentioned above, a number of upper and lower bounds on the quantities $\disp^*(n,d)$ and $N(\eps,d)$ have been established.
The elementary lower bound $\disp^*(n,d)\ge \frac{1}{n+1}$ was improved in \cite{Dum} to $\disp^*(n,d)\ge \frac{5}{4(n+5)}$.
As the next step, for $d\ge 2$ and $\eps \in(0,1/4)$, it was shown in~\cite{AHR} that
\begin{equation}\label{eq:AHR}
N(\varepsilon,d)\ge \frac{\log_2 d}{8\varepsilon},
\end{equation}
which seems to be the first lower bound on $N(\eps,d)$ that grows with $d$.
 
A further improvement was obtained by Bukh and Chao~\cite{BC}, who proved
\begin{equation}\label{eq:BC_disp}
\disp^*(n,d)\ge \frac{1}{e}\cdot\frac{2d}{n}\left(1-\frac{4d}{n^{1/d}}\right).
\end{equation}
This can be translated into the following lower bound on the inverse problem:
\begin{equation}\label{eq:BC_lower}
N(\eps,d)\ge \frac{1}{e}\cdot \frac{d}{\eps} \quad \text{ for every $\eps \le (8d)^{-d}$}.
\end{equation}
The bounds \eqref{eq:BC_disp} and \eqref{eq:BC_lower} are nearly optimal when $d$ is fixed,
and we study the limiting behavior for $n$ (or $1/\eps$) tending to infinity.
However, note that \eqref{eq:BC_disp} yields \eqref{eq:BC_lower} only for very small values of $\eps$.

Regarding upper bounds on $N(\eps,d)$, a series of recent papers \cite{Lit,Sosnovec, UV} have shown that 
\begin{equation}\label{eq:TVV_1}
N(\eps,d)\le \frac{C \log d\cdot\log\frac1\eps}{\eps^2}\,.
\end{equation}
In~\cite{TVV} we have shown that this bound is nearly optimal for $\eps$ being large:
\begin{theorem}[\cite{TVV}]\label{thm:OLD}
There is an absolute constant $c>0$ such that for any integer $d\ge 2$ and any real $\eps \in \left(\frac1{4\sqrt{d}}, \frac14\right)$,
it holds that
\begin{equation}\label{eq:TVV_2}
N(\eps,d) >  \frac{ c\,\log d }{ \eps^2 \cdot \log{\frac1\eps} } 
\,.
\end{equation}
\end{theorem}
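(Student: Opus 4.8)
The plan is to reduce the theorem to the known upper bounds on the sizes of cover-free families, exploiting the correspondence established in our earlier work. Suppose $X\subset[0,1]^d$ has $|X|=n$ and $\disp(X)\le\eps$. I would construct from $X$ an $r$-cover-free family consisting of $d$ subsets of the $n$-element ground set $X$, with $r$ of order $1/\eps$; since an $r$-cover-free family on an $m$-element ground set has at most $2^{O(m\log r/r^2)}$ members, taking $m=n$ and solving for $n$ gives $n\ge\Omega\bigl(r^2\log d/\log r\bigr)\asymp\log d/(\eps^2\log\tfrac1\eps)$, which is \eqref{eq:TVV_2}.

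For the construction, fix an integer $r\ge1$ and a real $t\in(0,1)$ with $t(1-t)^r>\eps$. The choice $t=1/(r+1)$ gives $t(1-t)^r\ge\tfrac1{e(r+1)}$, so for $\eps$ below an absolute constant one may take $r\asymp 1/\eps$ (hence $\log r\le\log\tfrac1\eps$), while for $\eps$ close to $\tfrac14$ the inequality still holds with $r=1$ and $t=\tfrac12$, as $t(1-t)=\tfrac14>\eps$. Perturbing $t$ slightly if needed, assume that no point of $X$ has any coordinate equal to $1-t$, and put
\[
F_j:=\{\,x\in X:\ x_j>1-t\,\}\qquad(j\in[d]),
\]
where $x_j$ denotes the $j$-th coordinate of $x$. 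To see that $\{F_1,\dots,F_d\}$ is $r$-cover-free, take arbitrary distinct $j_0,j_1,\dots,j_r\in[d]$ (possible since $r<d$ throughout the stated range) and consider the open box $B\subset[0,1]^d$ that equals $(1-t,1)$ in coordinate $j_0$, equals $(0,1-t)$ in coordinates $j_1,\dots,j_r$, and equals $(0,1)$ in all other coordinates. Then $\vol(B)=t(1-t)^r>\eps\ge\disp(X)$, so $B$ contains some $x\in X$; by construction $x\in F_{j_0}$ but $x\notin F_{j_i}$ for $i=1,\dots,r$, so $F_{j_0}\not\subseteq F_{j_1}\cup\dots\cup F_{j_r}$. (Specializing to $r=1$ shows the $F_j$ are pairwise incomparable, so the family indeed has $d$ distinct nonempty members.)

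It remains to insert this family into the size bound for cover-free families. For $r\ge2$ we obtain $d\le 2^{O(n\log r/r^2)}$, hence every such $n$ satisfies $n\ge\Omega\bigl(r^2\log d/\log r\bigr)$, and using $r\asymp1/\eps$ and $\log r\le\log\tfrac1\eps$ this yields $N(\eps,d)>c\log d/(\eps^2\log\tfrac1\eps)$ for a suitable absolute $c>0$. For $\eps$ near $\tfrac14$, i.e.\ $r=1$, the family is an antichain in $2^{X}$, so Sperner's theorem gives $d\le\binom{n}{\lfloor n/2\rfloor}\le 2^n$, whence $n\ge\log_2 d$ and thus $N(\eps,d)\ge\log_2 d$; this is again of the claimed form because in that regime $\eps$ and $\log\tfrac1\eps$ lie between absolute constants. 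The hypothesis $\eps\in(\tfrac1{4\sqrt d},\tfrac14)$ is what lets the two regimes be combined and keeps $\log r$, $\log\tfrac1\eps$, and $\log d$ of comparable orders; note that it forces $1/\eps<4\sqrt d$, so in particular $r<d$.

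The only genuine obstacle I foresee is the design of the cover-free family in the second paragraph: one must (i) carry out the elementary optimization of $t(1-t)^r$ to see that $t(1-t)^r>\eps$ is achievable with $r$ as large as $\Theta(1/\eps)$ — this is precisely what produces the factor $\eps^{-2}$ — and (ii) recognize that a private element witnessing cover-freeness is exactly a point of $X$ lying in a suitable axis-parallel box, so that the dispersion hypothesis supplies it automatically. After that, the remaining points (coordinate ties, the degenerate case $r=1$, and plugging in the explicit constant from the cover-free size bound) are routine.
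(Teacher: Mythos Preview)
Your proposal is correct and follows essentially the same route as the original proof in \cite{TVV}, which the present paper cites rather than reproves: the paper remarks explicitly that combining Corollary~\ref{cor:N_C} for $k=1$ with the Alon--Asodi bound $d\le 2^{O(n\log r/r^2)}$ (\cite[Lemma~2.8]{AA}) ``would essentially recover Theorem~\ref{thm:OLD} again.'' Your sets $F_j$ and your volume estimate $t(1-t)^r>\eps$ are precisely the $k=1$ instances of Lemma~\ref{lem:CF} and Lemma~\ref{lem:vol} (up to the cosmetic switch of thresholding coordinates from above rather than below), and your separate treatment of $r=1$ via Sperner is a clean way to handle the edge case $\eps$ near $\tfrac14$.
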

On one hand, \eqref{eq:TVV_2} matches \eqref{eq:TVV_1} up to a polylogarithmic factor in $1/\eps$.
On the other hand, \eqref{eq:TVV_2} holds only for $\eps$ of the order at least $1/\sqrt{d}$.
Some further bounds for different regimes of $\varepsilon$ and $d$ were obtained in \cite{AL, Lit, LL, Mac, Rudolf},
and the dispersion of certain specific sets was studied also in \cite{BH, HKKR, Krieg, Kritz, LW, Teml,Mario}.

The aim of this work is to establish lower bounds on $N(\eps,d)$ that are valid when $\eps < 1/\sqrt{d}$.
Our main result is the following.
\begin{theorem}\label{thm:k_gen}
Fix a positive integer $k$. There exists a constant $c_k$ such that 
if $d$ is a positive integer satisfying $d\ge d^{\frac{k}{k+1}}+k$ and $\eps \in \left(0,2^{-k-2}\right)$, then the following is true.
\begin{enumerate} \item[(i)] If $\eps\ge d^{-\frac{k^2}{k+1}}$ then $N(\eps,d)\ge c_k \cdot \eps^{-\frac{k+1}{k}}$.
\item[(ii)] If $\eps <d^{-\frac{k^2}{k+1}}$ then $N(\eps,d)\ge c_k \cdot d^{\frac{k}{k+1}}\,\eps^{-1}$.
\end{enumerate}
\end{theorem}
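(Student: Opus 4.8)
The plan is to exploit the connection between dispersion and cover-free families, combined with the recent bound of Michel and Scott on the size of $k$-cover-free families, exactly as advertised in the abstract. Recall that a family $\cF$ of subsets of a ground set $[m]$ is \emph{$k$-cover-free} if no member of $\cF$ is contained in the union of $k$ others. The link with dispersion, established in~\cite{TVV}, is roughly as follows: from an $n$-point set $X\subseteq[0,1]^d$ with small dispersion one extracts, in each of a suitable collection of coordinate directions, a ``slab decomposition'' whose empty boxes encode a cover-free structure; conversely, a large cover-free family on a small ground set forces many points. Concretely, I would first recall the precise statement from~\cite{TVV} that gives, for every integer $k\ge 1$, an implication of the shape: if $\disp^*(n,d)\le\eps$ with $\eps$ small relative to $k$, then there is a $k$-cover-free family of size roughly $d$ on a ground set of size roughly $\eps\cdot n$ (the thresholds $\eps<2^{-k-2}$ and the feasibility condition $d\ge d^{k/(k+1)}+k$ are there precisely to make this extraction go through and to guarantee the ground set is large enough to host the family).

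Next I would invoke the Michel--Scott bound: a $k$-cover-free family on a ground set of size $m$ has at most roughly $\exp\!\big(O_k(m^{k/(k+1)})\big)$ members, and — this is the point that yields the two-regime answer — there is also a \emph{polynomial} upper bound $O_k(m^{k})$ (or more precisely a bound that is polynomial in $m$ when $m$ is not too large), and the true maximum is the minimum of these two behaviours. Plugging $m\approx \eps n$ and the family size $\approx d$ into whichever of the two bounds is active gives, after solving for $n$, exactly the two cases in the theorem. When $\eps\ge d^{-k^2/(k+1)}$ the exponential bound is the binding one: $d\lesssim \exp(c_k (\eps n)^{k/(k+1)})$ rearranges to $\eps n\gtrsim (\log d)^{(k+1)/k}$, but in this regime one actually wants the cleaner conclusion $N(\eps,d)\ge c_k\eps^{-(k+1)/k}$, which I would get by using the family size together with the threshold on $\eps$; and when $\eps<d^{-k^2/(k+1)}$ the polynomial bound $d\lesssim (\eps n)^{k}$ gives $\eps n\gtrsim d^{1/k}$ — wait, this should be reconciled with the claimed $d^{k/(k+1)}\eps^{-1}$, so the correct reading is that one uses a $k$-cover-free family of size $\Theta(d)$ on a ground set of size $\Theta(\eps n)$ and the Michel--Scott \emph{lower} bound on $m$ in terms of the family size, namely $m\gtrsim_k (\log|\cF|)^{(k+1)/k}$ in one regime and $m\gtrsim_k |\cF|^{1/k}$ in the other, so that $\eps n\gtrsim_k \min\{(\log d)^{(k+1)/k},\,d^{1/k}\}$; matching the crossover $\log d\approx d^{1/(k+1)}$, i.e. $d^{1/k}$ vs $(\log d)^{(k+1)/k}$, pins the threshold at $\eps\approx d^{-k^2/(k+1)}$ and produces the two displayed inequalities after dividing by $\eps$ and absorbing constants into $c_k$.

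Concretely the steps, in order, would be: (1) state and cite the $\disp\Rightarrow$ cover-free extraction lemma from~\cite{TVV}, making explicit the dependence of the ground-set size and the family size on $n$, $d$, $\eps$, $k$, and checking that $\eps<2^{-k-2}$ and $d\ge d^{k/(k+1)}+k$ are what the lemma needs; (2) quote the Michel--Scott theorem in the form of a lower bound on the ground-set size $m$ of a $k$-cover-free family in terms of its cardinality, separating the two regimes; (3) substitute and solve for $n$, obtaining $\eps N(\eps,d)\gtrsim_k d^{1/k}$ in one regime and $\eps N(\eps,d)\gtrsim_k (\log d)^{(k+1)/k}$ in the other; (4) identify which regime corresponds to which constraint on $\eps$ by comparing $d^{1/k}$ with $(\log d)^{(k+1)/k}$, and observe that the extraction lemma is only meaningful when $\eps n$ is at least the ground-set threshold, which forces $\eps\gtrsim d^{-k^2/(k+1)}$ to be exactly the dividing line; (5) in case (i) rewrite the bound in the cleaner form $c_k\eps^{-(k+1)/k}$ by using $\eps\ge d^{-k^2/(k+1)}$ to replace the power of $\log d$ (equivalently the power of $d$) by the corresponding power of $1/\eps$; (6) in case (ii) simply divide $\eps n\gtrsim_k d^{k/(k+1)}$ — here one must be careful that the \emph{polynomial} Michel--Scott regime gives $m\gtrsim |\cF|^{1/k}$, and with $|\cF|\approx d$ one should instead arrange the family to have size $\approx d^{k/(k+1)}$ raised appropriately, or alternatively track constants so that the exponent works out; I would resolve this by taking the ground set and family sizes from~\cite{TVV} in their sharpest form rather than the rounded versions.

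The main obstacle I anticipate is bookkeeping at the crossover: making sure that the two regimes of the Michel--Scott bound are matched to the two conditions on $\eps$ with the \emph{exact} exponents $k^2/(k+1)$, $k/(k+1)$, and $(k+1)/k$ appearing in the statement, and that the feasibility hypothesis $d\ge d^{k/(k+1)}+k$ (which is what is needed for the cover-free family to fit on the extracted ground set) is invoked correctly. A secondary subtlety is the passage in case (i) from the ``$\log d$'' form naturally produced by the exponential Michel--Scott bound to the ``$\eps^{-(k+1)/k}$'' form in the theorem; this is a one-line substitution using the lower threshold on $\eps$, but it is the step where a wrong inequality direction would be easy to commit, so I would write it out carefully. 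None of the individual computations is deep — the content is entirely in the Michel--Scott input and the~\cite{TVV} reduction — but the arithmetic of exponents is where the proof earns its keep.
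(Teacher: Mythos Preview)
Your plan has the right headline --- cover-free families plus Michel--Scott --- but the concrete mechanism you describe is not the one that works, and two genuine ideas are missing.

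First, the cover-free notion. You work with ``$k$-cover-free'' in the classical sense (no set contained in the union of $k$ others), i.e.\ $(1,k)$-cover-free. The paper instead uses the generalized notion of a \emph{$(k,\ell)$-cover-free} family: no intersection of $k$ members is contained in the union of $\ell$ others. Here the $k$ in the theorem is the \emph{first} parameter, and $\ell\approx(4\eps)^{-1/k}$. The extraction is: set $u=(4\eps)^{1/k}$ and $F^u_j=\{x\in X:(x)_j<u\}$; then the family $\{F^u_1,\dots,F^u_d\}$ lives on the ground set $X$ itself (size $n$, not $\eps n$) and is $(k,\ell)$-cover-free provided $X$ meets every box with $k$ short sides $(0,u)$ and $\ell$ long sides $(u,1)$. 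Your claim that the ground set has size $\approx\eps n$ is simply wrong, and this is why your exponents refused to line up.

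Second, the Michel--Scott input is not the exponential/polynomial dichotomy you sketch (and no $\log d$ appears anywhere). The lemma used is
\[
C(k,s+t,d)\ \ge\ \frac{1}{2k^k}\min\bigl\{d^k,\ s(k+t)^k\bigr\},
\]
applied with $s+t=\ell$ and $s\approx \ell/(k+1)$, $t\approx k\ell/(k+1)$, yielding $N(\eps,d)\ge C(k,\ell,d)\gtrsim_k \min\{d^k,\ \ell^{k+1}\}=\min\{d^k,\ \eps^{-(k+1)/k}\}$. For $\eps\ge d^{-k^2/(k+1)}$ the second term is the minimum, and Part~(i) drops out directly --- no substitution of a $\log d$ by a power of $1/\eps$ is needed.

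Third, and this is the idea you are entirely missing: Part~(ii) does \emph{not} come from the other branch of the minimum. When $\eps<d^{-k^2/(k+1)}$ the feasibility condition $d\ge k+\ell$ fails (because $\ell\approx\eps^{-1/k}>d^{k/(k+1)}$), so the cover-free argument cannot be run as is. Instead one uses the elementary rescaling $N(\eps,d)\ge b\cdot N(b\eps,d)$ with $b=\lceil d^{-k^2/(k+1)}\eps^{-1}\rceil$, which pushes $b\eps$ up to the threshold $d^{-k^2/(k+1)}$ where Part~(i) applies, and then $b\cdot(b\eps)^{-(k+1)/k}\gtrsim_k d^{k/(k+1)}\eps^{-1}$. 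Your step~(6), where you try to force the exponent by ``taking the ground set and family sizes in their sharpest form,'' is not salvageable; the rescaling is a separate ingredient.
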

The proof of part (i) of Theorem \ref{thm:k_gen} is given in Section \ref{sec:2} and the second part of Theorem \ref{thm:k_gen}
is demonstrated in Section \ref{sec:3}. \deleted{We postpone the comparison of the bounds given in Theorem \ref{thm:k_gen} to other bounds available in the literature to Section 4.}
%\subsection{Conclusion}\label{sec:4}
%\added{But before we come to the proof of Theorem} \ref{thm:k_gen} let us discuss the relation of Theorem \ref{thm:k_gen} to the lower (and upper) bounds on $N(\eps,d)$ available in the literature.

\medskip

Let us now discuss the relation of Theorem \ref{thm:k_gen} to the lower (and upper) bounds on $N(\eps,d)$ available in the literature.

\medskip

We start with $k=1$. Due to the restriction $\eps\ge d^{-1/2}$, we always have $\frac{\log(d)}{\log(1/\eps)}\ge 2$
and the bound in Part (i) of Theorem \ref{thm:k_gen} is inferior to \eqref{eq:TVV_2} by logarithmic factors.
Observe that using Corollary \ref{cor:N_C} together with \cite[Lemma 2.8]{AA} (which was the main ingredient of our previous work \cite{TVV}) instead of \cite[Lemma 2.2]{Michel_Scott}
would essentially recover Theorem \ref{thm:OLD} again.
Part (ii) of Theorem~\ref{thm:k_gen} for $k=1$ states that
\begin{equation}\label{eq:elongation}
		N(\eps,d) \geq 
C\,\frac{\sqrt{d}}{\eps}
\end{equation}
and it clearly outperforms (possibly up to multiplicative constants) the lower bound
of \cite{AHR}, cf. \eqref{eq:AHR}. On the other hand, \eqref{eq:BC_lower} is stronger, but note that the validity of \eqref{eq:BC_lower}
was restricted to extremely small $\eps$'s. In contrast, \eqref{eq:elongation} holds for $\eps=O(d^{-1/2})$.
Finally, let us note that \cite[Theorem 2]{BC} shows that
\[
N(\eps,d)\le \frac{c\, d^2\log(d)}{\eps}.
\]
%We can merge \eqref{eq:elongation} with Theorem~\ref{thm:OLD} as follows.
%There is an absolute constant $c>0$, such that for every $d\ge 2$ and every $0<\eps<1/4$ it holds
%\begin{equation}\label{eq:merge_1}
%N(\eps,d)\ge c\min\left(\frac{\sqrt{d}}{\eps},\frac{\log d}{\eps^2 \log(1/\eps)}\right).
%\end{equation}

When increasing the value of $k$, Theorem \ref{thm:k_gen} yields a whole series of lower bounds on $N(\eps,d)$, conveniently indexed by $k$.
The bounds are relevant when $d$ is sufficiently large.
Specifically, if  $d=(k+1)^2$ then it is easy to check that $k+d^{\frac{k}{k+1}}\le d$ and, by the monotonicity of the function $d\to d-d^{\frac{k}{k+1}}$, Theorem~\ref{thm:k_gen} applies
whenever $d\ge (k+1)^2.$ To simplify the discussion, we neglect the factors depending only on $k$ (which we did not try to optimize), and we also assume that $d$ is sufficiently large.
We observe, that the lower bound from Part (ii) of Theorem~\ref{thm:k_gen} approaches \eqref{eq:BC_lower} as $k$ tends to infinity.
However, the interval where $\eps$ must lie in gets smaller when $k$ increases.

\medskip

Finally, let us discuss the interplay between the lower bounds of Theorem~\ref{thm:k_gen} for different values of $k$. Again, we neglect the factors $c_k$, which were not optimized.
Observe that, for a fixed $k$, we get a lower bound that changes its nature at $\eps=d^{-\frac{k^2}{k+1}}$.
And, for a fixed $k=k_0$, this bound is better than the bounds for other values of $k$ if $d^{-k_0}\le \eps < d^{-(k_0-1)}$,
i.e., when the value $d^{-\frac{k^2}{k+1}}$ lies in this interval. %, i.e., the breaking point for $\eps$ between Part (i) and Part (ii),

Therefore, for $d$ large enough, we get the best lower bound from Theorem \ref{thm:k_gen} in the following way:
\begin{enumerate}
\item Choose the integer $k$ such that $\eps \in \left[d^{-k}, d^{-(k-1)}\right)$.
\item If $\eps \ge d^{-\frac{k^2}{k+1}}$ then $N(\eps,d)\ge c_k \cdot \added{\eps^{-\frac{k+1}{k}}}$ \deleted{$\eps^{-\frac{k}{k+1}}$} by Part (i) for $k$.
\item Otherwise $\eps < d^{-\frac{k^2}{k+1}}$, thus $N(\eps,d)\ge c_k \cdot d^{\frac{k}{k+1}}\eps^{-1}$ by Part (ii) for $k$.
\end{enumerate}

\subsection{Notation}
For a finite set $X$, we denote its size by $|X|$
\deleted{.With a slight abuse of notation, we also write $|A|$ for $A\subseteq [0,1]^d$ to denote the Lebesgue measure of $A$.
It will be, however, always clear from the context whether the argument of $|\cdot|$ is a finite set or not.}
\added{and} $\added{\vol(A)}$ \added{stands for the Lebesgue measure of $A\subset[0,1]^d$.}
For a positive integer $d$, we denote by $[d]$ the set $\{1,2,\ldots,d\}$.
Additionally, for a non-negative integer $k$, we write $\binom{[d]}k$ to denote the collection of all the $k$-element subsets of $[d]$.
Given a point $x \in [0,1]^d$ and an integer $i \in [d]$, we denote by $(x)_i$ the $i$-th coordinate of $x$.

\section{Part (i) of Theorem~\ref{thm:k_gen} and cover-free families}\label{sec:2}

We utilize a strong connection between lower bounds on $N(\eps,d)$ and a certain problem in extremal set theory.
The following is a generalization of the notion of an \emph{$r$-cover-free family}, a crucial notion in the proof of~\eqref{eq:TVV_2} in~\cite{TVV}.

\begin{define}
Let $\mathcal{F}$ be a family of subsets of a ground set $X$.
We say that $\mathcal{F}$ is \emph{$(k,r)$-cover-free} if no intersection of any $k$ sets of $\mathcal{F}$ is contained in the union of any other $r$ sets from $\mathcal{F}$,
i.e.,
if for all $A_1,\dots,A_k\in\mathcal{F}$ and all $B_1,\dots,B_r\in \mathcal{F}\setminus\{A_1,\dots,A_k\}$ it holds that
\[
\bigcap_{i=1}^k A_i \not\subset \bigcup_{j=1}^r B_j.
\]
\end{define}
Let $C(k,r,d)$ be the smallest size of the ground set such that a $d$-element $(k,r)$-cover-free family exists,
i.e.,
\[ C(k,r,d):= \min\{n \in \mathbb{N} \colon \exists (k,r)\text{-cover-free family } \mathcal{F}_n \text{ on }[n]\text{ with }|\mathcal{F}_n|=d\}.\]
Note that the case $k=1$ corresponds to $r$-cover-free families introduced in 1964 by Kautz and Singleton~\cite{KS}
and then intensively studied since the 1980s in various contexts~\cite{AB,AA, Engel,  E1,E2,F,Rusz, SWZ}.
The following very recent result of Michel and Scott~\cite{Michel_Scott} plays a crucial role for us.
\begin{theorem}[{\cite[Lemma 2.2]{Michel_Scott}}]
	\label{thm:michel_scott}
	If $k,s,t$ and $d$ are positive integers satisfying $d\ge k+t$, then $ C(k,s+t,d) \geq \frac{1}{2k^k}\cdot\min\bigl\{d^k, \ s(k+t)^k\bigr\}$.
\end{theorem}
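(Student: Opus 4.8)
The plan is to pass to the transposed family and then combine a private-element count with a localization/induction. Fix any positive integers $k,s,t,d$ with $d\ge k+t$ and a $(k,s+t)$-cover-free family $\cF=\{F_1,\dots,F_d\}$ on $[n]$; since $C(k,s+t,d)$ is realized by such a family, it suffices to show $n\ge\frac{1}{2k^k}\min\{d^k,\ s(k+t)^k\}$, and I would argue by induction on $n$. For $x\in[n]$ put $G_x:=\{i\in[d]:x\in F_i\}$. For disjoint $S,T\subseteq[d]$ one has $\bigcap_{i\in S}F_i\subseteq\bigcup_{j\in T}F_j$ iff no $x$ satisfies $S\subseteq G_x$ and $G_x\cap T=\emptyset$, so $\cF$ being $(k,s+t)$-cover-free means: for every $k$-set $S$ and every $T\subseteq[d]\setminus S$ with $|T|=\min\{s+t,d-k\}$, some $x$ has $S\subseteq G_x\subseteq[d]\setminus T$. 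Two regimes are then immediate: if $d<k+s+t$ then $T$ must be all of $[d]\setminus S$, so every $k$-set $S$ forces an $x$ with $G_x=S$; distinct $S$ give distinct $x$, hence $n\ge\binom dk\ge(d/k)^k\ge\frac{1}{2k^k}\min\{d^k,s(k+t)^k\}$. The same bound is trivial if $n\ge\binom dk/2$. So from now on $d\ge k+s+t$ and $n<\binom dk/2$.

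Call a $k$-set $S$ \emph{private} if $G_x=S$ for some $x$. Since a point's degree-set determines $S$, there are at most $n<\binom dk/2$ private $k$-sets, hence more than $\binom dk/2$ non-private ones. The combinatorial core is: a non-private $k$-set $S$ has at least $s+t+1$ distinct witnesses, where a witness is a set of the form $G_x$ with $S\subseteq G_x$. Indeed, for such $S$ every witness is a proper superset of $S$, so $[d]\setminus G_x$ is a proper subset of $U:=[d]\setminus S$ (of size $d-k\ge s+t$); cover-freeness says every $(s+t)$-subset of $U$ lies in some $[d]\setminus G_x$; and a collection of proper subsets of $U$ whose members cover all $(s+t)$-subsets of $U$ has size $\ge s+t+1$ (pick a point of $U$ outside each member — at most $s+t$ points — and extend them to an $(s+t)$-set contained in none of the members).

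Next, double-count incidences between non-private $k$-sets and their witnesses: a point $x$ is a witness for exactly the $k$-sets inside $G_x$, i.e. for $\binom{d(x)}k$ of them, so with $\Delta:=\max_x d(x)$,
\[
n\binom{\Delta}k\ \ge\ \sum_{x\in[n]}\binom{d(x)}k\ \ge\ (s+t+1)\Bigl(\binom dk-n\Bigr)\ >\ \frac{s+t+1}{2}\binom dk .
\]
This already forces $n$ to be large when $\Delta$ is small. To handle a large $\Delta$, localize: if $x_0$ has maximal degree $\Delta$, then $\{F_i\setminus\{x_0\}:i\in G_{x_0}\}$ is a $(k,s+t)$-cover-free family of $\Delta$ sets on $[n]\setminus\{x_0\}$ — a witness of a request built inside $G_{x_0}$ must avoid $\bigcup_{j\in T}F_j$, which contains $x_0$, so it is $\ne x_0$ — and the inductive hypothesis gives $n-1\ge\frac{1}{2k^k}\min\{\Delta^k,s(k+t)^k\}$. (The reductions $C(k,s+t,d)\ge C(k,s,d-t)$, by deleting $t$ sets and discarding their union from the ground set, and $\min_i|F_i|\ge C(k-1,s+t,d-1)$, by passing to the link of one set, are available for further bookkeeping and for pushing the induction in $k$.)

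The main obstacle is calibrating the threshold that splits the two branches. On the "small $\Delta$" side the displayed inequality must already yield $n\ge\frac{1}{2k^k}\min\{d^k,s(k+t)^k\}$; on the "large $\Delta$" side the localized subfamily must still satisfy $\min\{\Delta^k,s(k+t)^k\}\ge\min\{d^k,s(k+t)^k\}$, which needs $\Delta$ of order at least $s^{1/k}(k+t)$, so the threshold cannot be set too low. Making these two constraints meet — while keeping the constant $\frac{1}{2k^k}$ and the precise split of $r=s+t$ into the multiplier $s$ and the base shift $k+t$ intact through both branches — is the delicate point; a crude count gives only something like $d^k/(k^kr^{k-1})$, which is already too weak for $k\ge2$. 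Everything else is the elementary estimates above.
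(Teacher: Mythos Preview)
The paper does not prove this statement at all: Theorem~\ref{thm:michel_scott} is quoted verbatim from \cite[Lemma~2.2]{Michel_Scott} and used as a black box, so there is no ``paper's own proof'' to compare against. Your write-up is therefore not a comparison target but a standalone attempt at the Michel--Scott lemma.

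As a proof, however, your proposal is incomplete by your own admission. The set-up (dual sets $G_x$, private $k$-sets, the witness lower bound $s+t+1$ for non-private $S$, and the localization to the link of a maximum-degree point) is sound, but the argument stops precisely at the point that carries the content of the lemma: you need a threshold $\Theta$ for $\Delta$ such that (a) for $\Delta\le\Theta$ the double count $n\binom{\Delta}{k}>\tfrac{s+t+1}{2}\binom{d}{k}$ already yields $n\ge\tfrac{1}{2k^k}\min\{d^k,s(k+t)^k\}$, and (b) for $\Delta>\Theta$ the induction on the localized family gives the same bound, which forces $\Theta\ge s^{1/k}(k+t)$. Plugging $\Theta=s^{1/k}(k+t)$ into (a) gives only $n\gtrsim\frac{(s+t)\,k!}{s(k+t)^k}\cdot\frac{d^k}{k^k}$, which is far short of $\tfrac{1}{2k^k}s(k+t)^k$ when $t$ is large; you yourself note the crude outcome is of order $d^k/(k^k r^{k-1})$ with $r=s+t$, which does not recover the split $s\cdot(k+t)^k$. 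There is also a minor loose end: the inductive appeal after localization needs $\Delta\ge k+t$ to match the hypothesis of the statement, and the ``$-1$'' in $n-1\ge\cdots$ leaves no slack against the exact constant $\tfrac{1}{2k^k}$. In short, the elementary pieces are correct, but the decisive step---how the parameters $s$ and $t$ separately enter the bound---is missing, and without it the argument does not prove the theorem.
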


The connection between lower bounds on $N(\eps,d)$ and cover-free families was first discovered in \cite{TVV}.
Our approach here is similar, but more elaborate and more flexible.
We define a very specific set of axis-parallel boxes of volume at least $\eps$,
which allows us to translate the bounds on $N(\eps,d)$ from below to the existence of $(k,r)$-cover-free families.
The role of $k$ in this reduction will be such that we obtain a whole range of new lower bounds,
each valid only for $\eps$ that is appropriately bounded from below.

Fix the dimension $d$.
For all positive integers $k$ and $\ell$ with $k+\ell\le d$ and every $u\in(0,1)$, we define a collection of boxes that have some $k$ sides equal to $(0,u)$ and some other $\ell$ sides equal to $(u,1)$.
Specifically, for a given set $K \subseteq [d]$ with $|K|=k$ and $L \subseteq [d]\setminus K$ with $|L|=\ell$, let $B^{K,L} \subseteq [0,1]^d$ be defined as
\[
B^{K,L}_u:=I_1 \times I_2 \times \cdots \times I_d, \quad \mbox{where }  \begin{cases}
	I_i =(0,u) \quad \mbox {for } i \in K, \\
	I_i =(u,1) \quad \mbox {for } i \in L, \\
	I_i =(0,1) \quad \mbox{for } i \in [d] \setminus \left(K \cup L\right).
\end{cases}
\]

Let $\cB(d,k,\ell,u) := \left\{ B^{K,L}_u \subseteq[0,1]^d: K \in \binom{[d]}{k} , L \in \binom{[d]\setminus K}{\ell} \right\}$.
We choose $\ell$ and $u$ based on the values of $k$ and $\eps$ to ensure that the boxes in $\cB(d,k,\ell,u)$ have volume at least $\eps$.
\begin{lemma}\label{lem:vol}
Let $k$ and $d$ be positive integers, let $\eps \in \left(0,2^{-k-2}\right)$, and set $u:=(4\eps)^{1/k}$.
If $d\ge k +\left\lfloor \frac1u\right\rfloor$, then $\added{\vol(B)}\text{\deleted{$|B|$}} > \eps$ for every $B \in \cB\left(d,k,\left\lfloor \frac1u\right\rfloor,u\right)$.
\end{lemma}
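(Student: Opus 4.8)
The plan is to compute the volume of an arbitrary box $B\in\cB\!\left(d,k,\lfloor 1/u\rfloor,u\right)$ directly and then reduce the claim to an elementary one-variable inequality. Write $\ell:=\lfloor 1/u\rfloor$. By the very definition of $B^{K,L}_u$, every box in this family is a product of $k$ intervals of length $u$, of $\ell$ intervals of length $1-u$, and of $d-k-\ell$ intervals of length $1$; the hypothesis $d\ge k+\ell$ is exactly what guarantees that the family is nonempty and that this product makes sense. Hence $\vol(B)=u^k(1-u)^\ell$, independently of the particular choice of $K$ and $L$. Since $u=(4\eps)^{1/k}$ we have $u^k=4\eps$, so it suffices to prove that $(1-u)^\ell>1/4$.

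First I would record that $u<1/2$: the assumption $\eps<2^{-k-2}$ gives $u^k=4\eps<2^{-k}$, whence $u<1/2$ (strictly). In particular $1/u>2$, so $\ell=\lfloor 1/u\rfloor\ge 2$, and also $\ell\le 1/u$. Because $0<1-u<1$, the map $t\mapsto(1-u)^t$ is decreasing, so $(1-u)^\ell\ge(1-u)^{1/u}$. Thus the lemma follows once we show $(1-u)^{1/u}>1/4$ for all $u\in(0,1/2)$.

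For that last step the cleanest route is a comparison with the exponential: the map $u\mapsto 4^{-u}=e^{-u\ln 4}$ is convex on $\mathbb{R}$, and it agrees with the affine map $u\mapsto 1-u$ at the two endpoints $u=0$ and $u=1/2$ of the interval $[0,1/2]$ (since $4^{0}=1$ and $4^{-1/2}=1/2$). Convexity therefore forces $4^{-u}\le 1-u$ on $[0,1/2]$, with strict inequality on the open interval $(0,1/2)$. Consequently, for $u\in(0,1/2)$, $(1-u)^{1/u}>\bigl(4^{-u}\bigr)^{1/u}=4^{-1}=1/4$. Combining the chain $(1-u)^\ell\ge(1-u)^{1/u}>1/4$ with $u^k=4\eps$ yields $\vol(B)=4\eps\,(1-u)^\ell>4\eps\cdot\tfrac14=\eps$, as desired.

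The only mildly delicate point, and the one I expect to be the main obstacle, is the strict inequality $(1-u)^{1/u}>1/4$ on $(0,1/2)$ together with the fact that the endpoint $u=1/2$ is genuinely excluded — which is precisely why the hypothesis is the strict bound $\eps<2^{-k-2}$ rather than $\eps\le 2^{-k-2}$. Everything else is bookkeeping: reading off the shape of the boxes, using $d\ge k+\lfloor 1/u\rfloor$ only to keep the family nonempty, and the monotonicity of $t\mapsto(1-u)^t$. An alternative to the convexity comparison is to check that $u\mapsto\frac{\ln(1-u)}{u}$ is strictly decreasing on $(0,1)$ (its derivative has the sign of $-\frac{u}{1-u}-\ln(1-u)$, which vanishes at $u=0$ and has negative derivative thereafter) and that its value at $u=1/2$ equals $-\ln 4$; I find the convexity argument shorter to write out.
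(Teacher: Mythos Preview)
Your proof is correct and follows essentially the same route as the paper: compute $\vol(B)=u^k(1-u)^{\lfloor 1/u\rfloor}=4\eps\,(1-u)^{\lfloor 1/u\rfloor}$, reduce to $(1-u)^{1/u}>1/4$ on $(0,1/2)$, and establish this via the convexity of $4^{-u}=2^{-2u}$ against the secant $1-u$ on $[0,1/2]$. The paper's version is simply terser; you spell out the endpoint check and the strictness coming from $\eps<2^{-k-2}$ more carefully, but there is no substantive difference in method.
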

\begin{proof}
Observe that the bound on $\eps$ ensures that $u \in (0,1/2)$.
Therefore, it holds that $(1-u)^{\frac{1}{u}} > \frac{1}{4}$ by convexity of $2^{-2u}$ in this interval.
In particular,
\[
\added{\vol(B)}\text{\deleted{$|B|$}} = u^k \cdot (1-u)^{\left\lfloor \frac1u\right\rfloor} \ge 4\eps \cdot (1-u)^{\frac1{u}} > \eps
\,,\]
which finishes the proof.
\end{proof}

Let us now describe how we assign to a point set $X\subset [0,1]^d$ a certain family of subsets of $[d]$.
Given $u \in (0,1)$ and $j \in [d]$, we define
\[F^u_j := \{x \in X: (x)_j < u\}\,.\]
The connection between minimal dispersion and extremal set theory, which plays the central role in our proof of Part (i) of Theorem~\ref{thm:k_gen}, is the following.
\begin{lemma}\label{lem:CF}
Let $d$ be a positive integer, $u\in(0,1)$, $k$ and $\ell$ positive integers with $d\ge k+\ell$, and $X\subset [0,1]^d$.
If $X$ intersects every box in $\cB(d,k,\ell,u)$ then the~family $\mathcal{F}=\left\{F^u_1,F^u_2,\ldots,F^u_d\right\}$ is $(k,\ell)$-cover-free.
\end{lemma}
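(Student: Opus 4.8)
The plan is to use the boxes $B^{K,L}_u$ from $\cB(d,k,\ell,u)$ themselves as certificates of cover-freeness: a point forced to lie in such a box automatically has small values on the coordinates indexed by $K$ and large values on those indexed by $L$, which is exactly the configuration that separates $\bigcap_{i\in K}F^u_i$ from $\bigcup_{j\in L}F^u_j$.

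First I would set up the dictionary between the two settings. For $x\in X$ and $i\in[d]$ one has $x\in F^u_i$ if and only if $(x)_i<u$, whereas $x\in B^{K,L}_u$ if and only if $(x)_i\in(0,u)$ for every $i\in K$ and $(x)_j\in(u,1)$ for every $j\in L$. In particular, if $x\in B^{K,L}_u$ then $x\in F^u_i$ for all $i\in K$ while $x\notin F^u_j$ for all $j\in L$. Now fix $K\in\binom{[d]}{k}$ and $L\in\binom{[d]\setminus K}{\ell}$; since $k+\ell\le d$ the box $B^{K,L}_u$ is a member of $\cB(d,k,\ell,u)$, so by hypothesis there is some $x\in X$ lying in $B^{K,L}_u$. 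By the dictionary, this $x$ belongs to $\bigcap_{i\in K}F^u_i$ but to none of the sets $F^u_j$ with $j\in L$, and hence $\bigcap_{i\in K}F^u_i\not\subseteq\bigcup_{j\in L}F^u_j$.

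It then remains to phrase this in terms of the formal definition of $(k,\ell)$-cover-freeness. Given $A_1,\dots,A_k\in\cF$ and $B_1,\dots,B_\ell\in\cF\setminus\{A_1,\dots,A_k\}$, write $A_i=F^u_{a_i}$ and $B_j=F^u_{b_j}$. Since each $B_j$ is, as a set, different from every $A_i$, we have $b_j\neq a_i$ for all $i,j$, so the index sets $K_0:=\{a_1,\dots,a_k\}$ and $L_0:=\{b_1,\dots,b_\ell\}$ are disjoint; note that $|K_0|\le k$ and $|L_0|\le\ell$, with equality unless some of the $A_i$ or some of the $B_j$ coincide. Using the slack $d\ge k+\ell$, enlarge them to sets $K\supseteq K_0$ and $L\supseteq L_0$ with $|K|=k$, $|L|=\ell$ and $K\cap L=\emptyset$, and apply the argument above to this pair $K,L$: the resulting point lies in $\bigcap_{i\in K}F^u_i\subseteq\bigcap_i A_i$ and avoids $\bigcup_{j\in L}F^u_j\supseteq\bigcup_j B_j$, so $\bigcap_i A_i\not\subseteq\bigcup_j B_j$, as required. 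The argument is otherwise routine; the only point that needs a little care is precisely this last reduction dealing with coinciding sets among the $A_i$ and $B_j$, which is exactly where the hypothesis $d\ge k+\ell$ is used a second time.
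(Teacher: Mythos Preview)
Your proof is correct and follows essentially the same approach as the paper: pick a point of $X$ in the box $B^{K,L}_u$ and observe that it lies in $\bigcap_{i\in K}F^u_i$ but outside $\bigcup_{j\in L}F^u_j$. The paper phrases this as a one-paragraph contradiction argument indexed directly by $K$ and $L$; your second paragraph goes a step further and carefully reconciles this with the formal definition of $(k,\ell)$-cover-freeness (handling possible coincidences among the $A_i$ or the $B_j$ by padding the index sets), a detail the paper leaves implicit.
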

\begin{proof}
Suppose for the sake of contradiction that there exist two index sets $K \in \binom{[d]}k$ and $L \in \binom{[d]\setminus K}\ell$ such that
\begin{equation*}\label{eq:spor}
	\bigcap_{i\in K} F^u_i \subseteq \bigcup_{j \in L} F^u_j
\,.\end{equation*}
There must, however, exist a point $x \in X \cap B^{K,L}_u$.
Since $(x)_i < u$ for all $i \in K$, we get $x\in F^u_i$ for all $i\in K$ and hence $x\in \bigcap_{i\in K} F^u_i$.
On the other hand, $(x)_j > u$ for all $j\in L$ and thus $x\not\in \bigcup_{j \in L} F^u_j$. A contradiction.
\end{proof}

Lemmas~\ref{lem:vol} and~\ref{lem:CF} yield a lower bound on $N(\eps,d)$ in terms of $C(k,r,d)$.

\begin{cor}\label{cor:N_C}
If $k$ is a positive integer and $\eps \in \left(0,2^{-k-2}\right)$ such that they satisfy $d\ge k + \left\lfloor (4\eps)^{-1/k}\right\rfloor$,
then $N(\eps, d) \ge C\left(k, \left\lfloor (4\eps)^{-1/k} \right\rfloor, d\right).$
\end{cor}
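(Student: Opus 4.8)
The plan is to combine the two lemmas just proved, essentially by unwinding the definitions. Set $u:=(4\eps)^{-1/k}$... wait, let me be careful: the paper sets $u:=(4\eps)^{1/k}$, and writes $\ell=\lfloor 1/u\rfloor=\lfloor(4\eps)^{-1/k}\rfloor$. So first I would fix $u:=(4\eps)^{1/k}$ and $\ell:=\lfloor 1/u\rfloor$, and note that the hypothesis $d\ge k+\lfloor(4\eps)^{-1/k}\rfloor$ is exactly $d\ge k+\ell$, so both Lemma~\ref{lem:vol} and Lemma~\ref{lem:CF} are applicable.

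Next I would argue by contraposition (or directly). Let $n:=N(\eps,d)$ and let $X\subset[0,1]^d$ be a point set with $|X|=n$ and $\disp(X)\le\eps$. By Lemma~\ref{lem:vol}, every box $B\in\cB(d,k,\ell,u)$ has $\vol(B)>\eps\ge\disp(X)$, so by the definition of dispersion in~\eqref{eq:def_disp} no such box can be empty of points of $X$; that is, $X$ intersects every box in $\cB(d,k,\ell,u)$. Now Lemma~\ref{lem:CF} applies and gives that the family $\cF=\{F^u_1,\dots,F^u_d\}$ is $(k,\ell)$-cover-free. This is a family of $d$ subsets of the ground set $X$, which has $n$ elements, so by the definition of $C(k,r,d)$ as the minimum ground-set size we get $n\ge C(k,\ell,d)=C\bigl(k,\lfloor(4\eps)^{-1/k}\rfloor,d\bigr)$, which is the claimed inequality.

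One small subtlety worth spelling out: the definition of a $(k,\ell)$-cover-free family and of $C(k,r,d)$ implicitly wants the $d$ sets of $\cF$ to be distinct (so that $|\cF|=d$ makes sense as a $d$-element family). This is automatic here, because if $F^u_i=F^u_j$ for $i\ne j$ with both, say, in a putative $K$, or one in $K$ and one in $L$, then the cover-free condition with $K\ni i$ and any $L\ni j$ (or vice versa) fails trivially; but Lemma~\ref{lem:CF} already rules this out, so the sets are genuinely distinct. Alternatively, one can just note that $C$ is monotone enough that a repeated-sets family only makes the bound easier, but the cleanest route is to observe that the conclusion of Lemma~\ref{lem:CF} forces distinctness. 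I do not expect any real obstacle here — the corollary is purely a bookkeeping consequence of the two lemmas and the definitions — so the only thing to be careful about is matching the floor expressions $\lfloor 1/u\rfloor$ and $\lfloor(4\eps)^{-1/k}\rfloor$ and checking that the dimension hypothesis translates correctly into $d\ge k+\ell$.

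\begin{proof}
Set $u:=(4\eps)^{1/k}$ and $\ell:=\left\lfloor\frac1u\right\rfloor=\left\lfloor(4\eps)^{-1/k}\right\rfloor$.
The assumption $d\ge k+\left\lfloor(4\eps)^{-1/k}\right\rfloor$ means precisely that $d\ge k+\ell$, so both Lemma~\ref{lem:vol} and Lemma~\ref{lem:CF} are applicable with these values of $k$, $\ell$ and $u$.

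Let $n:=N(\eps,d)$ and pick $X\subset[0,1]^d$ with $|X|=n$ and $\disp(X)\le\eps$.
By Lemma~\ref{lem:vol}, every box $B\in\cB(d,k,\ell,u)$ satisfies $\vol(B)>\eps\ge\disp(X)$.
In view of the definition~\eqref{eq:def_disp} of the dispersion, no box of volume exceeding $\disp(X)$ can avoid $X$; hence $X$ intersects every box in $\cB(d,k,\ell,u)$.
Lemma~\ref{lem:CF} now yields that the family $\mathcal{F}=\{F^u_1,\ldots,F^u_d\}$ of subsets of $X$ is $(k,\ell)$-cover-free.
In particular these $d$ sets are pairwise distinct: if two of them coincided, the cover-free condition would be violated by choosing these two indices into $K$ and $L$.
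Thus $\mathcal{F}$ is a $(k,\ell)$-cover-free family of $d$ subsets of a ground set of size $n=|X|$, and by the definition of $C(k,r,d)$ we conclude
\[
N(\eps,d)=n\ge C\left(k,\left\lfloor(4\eps)^{-1/k}\right\rfloor,d\right).
\qedhere
\]
\end{proof}
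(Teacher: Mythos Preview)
Your proof is correct and follows essentially the same route as the paper: set $u=(4\eps)^{1/k}$, $\ell=\lfloor1/u\rfloor$, use Lemma~\ref{lem:vol} to see that any $X$ with $\disp(X)\le\eps$ hits every box in $\cB(d,k,\ell,u)$, then apply Lemma~\ref{lem:CF} to obtain a $(k,\ell)$-cover-free family on the ground set $X$ and invoke the definition of $C(k,\ell,d)$. Your added remark on the pairwise distinctness of the $F^u_j$ is a nice clarification that the paper leaves implicit.
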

\begin{proof}
Let $u:=(4\eps)^{1/k}$ and $\ell := \left\lfloor \frac1{u}\right\rfloor$.
If $X\subset [0,1]^d$ intersects all the axes-parallel boxes with volume larger than $\eps$,
then Lemma \ref{lem:vol} yields that $X$ intersects every box in $\cB\left(d,k,\ell,u\right)$.
By Lemma \ref{lem:CF}, the corresponding family $\mathcal{F}=\{F^u_1,\dots,F^u_d\}$ is $(k,\ell)$-cover-free.
In particular, the ground set of ${\mathcal F}$, which is $X$, contains at least $C(k,\ell,d)$ points.
\end{proof}

We are now ready to prove the main result of this section --- Part (i) of Theorem~\ref{thm:k_gen} --- which we restate here for the sake of convenience.

\begin{prop}\label{prop:part1}
Fix a positive integer $k$.
If $d$ is a positive integer satisfying $d\ge d^{\frac{k}{k+1}}+k$ and $\eps \in \left[d^{-\frac{k^2}{k+1}},2^{-k-2}\right)$, then
\begin{equation*}
N(\eps,d)\ge \frac{1}{16ek^k(k+1)} \cdot \eps^{-\frac{k+1}{k}}.
\end{equation*}
\end{prop}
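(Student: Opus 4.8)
The plan is to instantiate the cover-free reduction of Lemma~\ref{lem:CF} with a more efficient family of boxes than the one behind Corollary~\ref{cor:N_C}, and then apply the Michel--Scott bound of Theorem~\ref{thm:michel_scott}. For a box in $\cB(d,k,\ell,u)$ the volume is $u^k(1-u)^\ell$, which for fixed $\ell$ is maximised at $u=\tfrac{k}{k+\ell}$ with value $\tfrac{k^k\ell^\ell}{(k+\ell)^{k+\ell}}$. Applying the elementary inequality $1-x>e^{-x/(1-x)}$ with $x=\tfrac{k}{k+\ell}$ gives $(1-u)^\ell>e^{-k}$, so this maximal volume exceeds $\bigl(\tfrac{k}{e(k+\ell)}\bigr)^k$; hence whenever $k+\ell\le\tfrac{k}{e}\eps^{-1/k}$, every box in $\cB\bigl(d,k,\ell,\tfrac{k}{k+\ell}\bigr)$ has volume $>\eps$, and so a point set of dispersion at most $\eps$ must meet all of them. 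The gain over Corollary~\ref{cor:N_C} is that $\ell$ may now be taken of order $\tfrac{k}{e}\eps^{-1/k}$ rather than of order $(4\eps)^{-1/k}$, and replacing the constant $4$ by $e$ is precisely what produces the factor $\tfrac{1}{16e}$ in the conclusion.

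Concretely, if $\tfrac{1}{16ek^k(k+1)}\eps^{-(k+1)/k}<2$ the inequality is trivial, since $N(\eps,d)\ge2$ for every $\eps<\tfrac12$ (a single point always leaves an empty box of volume $\ge\tfrac12$). So assume $\tfrac{1}{16ek^k(k+1)}\eps^{-(k+1)/k}\ge2$; a short computation then shows that $\tfrac{k}{e}\eps^{-1/k}$ is comfortably larger than $k$. Let $\ell$ be the largest positive integer with $\tfrac{k^k\ell^\ell}{(k+\ell)^{k+\ell}}>\eps$, capped at $d-k$ (one checks $d-k\ge2$ from $d\ge d^{k/(k+1)}+k$, and that the uncapped value is $\ge2$ in this regime), and put $u:=\tfrac{k}{k+\ell}$. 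Then $d\ge k+\ell$, every box in $\cB(d,k,\ell,u)$ has volume $>\eps$, and Lemma~\ref{lem:CF} gives $N(\eps,d)\ge C(k,\ell,d)$. Moreover, since $\ell+1$ violates the defining inequality, the lower bound on the volume yields $k+\ell\ge\tfrac{k}{e}\eps^{-1/k}-1$ in the uncapped case (and $k+\ell=d$ in the capped one).

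To bound $C(k,\ell,d)$ from below I would split $\ell=s+t$ with $s:=\bigl\lfloor\tfrac{k+\ell}{k+1}\bigr\rfloor$ and $t:=\ell-s$; these are positive integers (as $\ell\ge2$) and satisfy $d\ge k+t$ (as $t\le\ell\le d-k$), so Theorem~\ref{thm:michel_scott} gives $C(k,\ell,d)\ge\tfrac{1}{2k^k}\min\{d^k,\,s(k+t)^k\}$. The hypothesis $\eps\ge d^{-k^2/(k+1)}$ is equivalent to $d^k\ge\eps^{-(k+1)/k}$, so the first entry of the minimum is already at least $\eps^{-(k+1)/k}$. For the second, $s\ge\tfrac{k+\ell}{k+1}-1$ and $k+t=k+\ell-s\ge\tfrac{k(k+\ell)}{k+1}$ give $s(k+t)^k\ge\tfrac{k^k(\ell-1)(k+\ell)^k}{(k+1)^{k+1}}$; in the uncapped case $k+\ell\ge\tfrac{k}{e}\eps^{-1/k}-1$ is large enough (by the standing assumption on $\eps$) that $\ell-1$ and $k+\ell$ are each a fixed fraction of $\tfrac{k}{e}\eps^{-1/k}$, so this quantity is of order $\tfrac{k^{2k+1}}{e^{k+1}(k+1)^{k+1}}\eps^{-(k+1)/k}$, while in the capped case $k+\ell=d$ makes it at least $\tfrac{k^k d^k}{(k+1)^{k+1}}$ and the minimum is then governed by the harmless term $d^k$. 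Dividing by $2k^k$ and checking the clean numerical inequalities $8e\ge(1+\tfrac1k)^k$ and $8k^{2k+1}\ge\bigl(e(k+1)\bigr)^k$ (both valid for every $k\ge1$) yields $N(\eps,d)\ge\tfrac{1}{16ek^k(k+1)}\eps^{-(k+1)/k}$.

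The step I expect to be the genuine obstacle is this final bookkeeping: one has to make sure that all the slack accumulated along the way --- the rounding in the choice of $\ell$ and in the split $s+t$, the gap between $\tfrac{k^k\ell^\ell}{(k+\ell)^{k+\ell}}$ and its lower bound $\bigl(\tfrac{k}{e(k+\ell)}\bigr)^k$, and the difference between $k+\ell$ and $\tfrac{k}{e}\eps^{-1/k}$ --- stays within the margin between the crude target constant $\tfrac{1}{16ek^k(k+1)}$ and the essentially optimal $\tfrac{k^{k+1}}{2e^{k+1}(k+1)^{k+1}}$, \emph{uniformly} in $k$ and over the whole window $\eps\in[d^{-k^2/(k+1)},2^{-k-2})$. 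The thinnest part of the margin is the range of $\eps$ just below $2^{-k-2}$, where $\ell$ is smallest (in particular for $k=1$); there it may be cleanest to fall back on the trivial bound $N(\eps,d)\ge2$, or on a slightly sharper elementary lower bound, rather than on the cover-free estimate.
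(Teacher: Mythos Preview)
Your overall framework is exactly the paper's: reduce via Lemma~\ref{lem:CF} to a $(k,\ell)$-cover-free family and then bound $C(k,\ell,d)$ from below using Theorem~\ref{thm:michel_scott} with a split $\ell=s+t$ that balances the two terms in the minimum. The difference is that you are working considerably harder than necessary. The paper does \emph{not} optimise $u$; it simply invokes Corollary~\ref{cor:N_C} as stated, with $u=(4\eps)^{1/k}$ and $\ell=\lfloor(4\eps)^{-1/k}\rfloor$, takes $s=\lceil(\ell+1)/(k+1)\rceil$ and $t=\lfloor(k\ell-1)/(k+1)\rfloor$, and applies Theorem~\ref{thm:michel_scott}. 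A four-line chain of inequalities, using only $(\tfrac{k}{k+1})^k\ge\tfrac1e$ and $4^{-(k+1)/k}\ge\tfrac18$ for $k\ge2$, already yields the constant $\tfrac{1}{16ek^k(k+1)}$ for every $k\ge2$, with no case analysis, no capping of $\ell$ at $d-k$, and no endpoint bookkeeping. The hypothesis $\eps\ge d^{-k^2/(k+1)}$ is used exactly once, to see that $(4\eps)^{-(k+1)/k}\le d^k$ so the minimum is attained by the second term.

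The case $k=1$---precisely the regime you flag as the ``thinnest part of the margin''---the paper dispatches in one line by citing Theorem~\ref{thm:OLD}. Your attempt to treat all $k$ uniformly through the optimised volume $\tfrac{k^k\ell^\ell}{(k+\ell)^{k+\ell}}$ is what generates the unresolved numerical checks at the end; the ``gain'' of replacing $4$ by $e$ is not needed for the stated constant, and buying it is exactly what leaves you with an incomplete argument. In short: the route is correct, but drop the optimisation of $u$, use Corollary~\ref{cor:N_C} directly for $k\ge2$, and quote Theorem~\ref{thm:OLD} for $k=1$.
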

\begin{proof}
The case $k=1$ follows from Theorem~\ref{thm:OLD}, so in the remaining we assume $k\ge2$.
First, observe that $k+\lfloor(4\eps)^{-1/k}\rfloor\le k+\eps^{-1/k}\le k+d^{\frac{k}{k+1}}\le d$.

Next, let $\ell :=  \left\lfloor (4\eps)^{-1/k} \right\rfloor$, and define %\footnote{$s\ge 1$ - clear; $t\ge 1$: We need $k\ge 2$ and $(4\eps)^{-1/k}\ge 2$.}
\[
s := \left\lceil \frac{\ell+1}{k+1}  \right\rceil
\quad\mbox{and}\quad
t := \left\lfloor \frac{k\cdot\ell-1}{k+1} \right\rfloor.
\]
Observe that this choice ensures that $s,t\ge 1$ and $s+t=\ell$.
Theorem \ref{thm:michel_scott} and Corollary~\ref{cor:N_C} yield that
\begin{align}
\notag N(\eps,d)&\ge C(k,\ell,d) = C(k,s+t,d)\\
\notag &\geq \frac{1}{2k^k}\cdot\min\biggl\{d^k, \frac{\ell+1}{k+1} \cdot \left(k+\left\lfloor \frac{k\cdot\ell-1}{k+1} \right\rfloor\right)^k \biggr\}\\
\notag %\label{eq:C_min}
&\geq \frac{1}{2k^k}\cdot\min\biggl\{d^k, \frac{(4\eps)^{-1/k}}{k+1}\left(\frac{k}{k+1} (4\eps)^{-1/k}\right)^k \biggr\}\\
\notag &\geq \frac{1}{2k^k}\cdot\min\biggl\{d^k, \frac{1}{e(k+1)}(4\eps)^{-\frac{k+1}k}\biggr\}\\
\notag &\ge \frac{1}{2k^ke(k+1)}\cdot\min\biggl\{d^k, (4\eps)^{-\frac{k+1}k}\biggr\}.
\end{align}
Under the condition on $\eps$, the minimum is attained by the second term, and since $4^{-\frac{k+1}k}\ge\frac18$ for $k\ge2$, the statement of the proposition follows.
\end{proof}

\section{Rescaling and Part (ii) of Theorem~\ref{thm:k_gen}}\label{sec:3}

We start with a rescaling-type observation analogous to \cite[Lemma 1]{AHR}, which was stated for $\disp^*(n,d)$.
\begin{lemma}\label{lem:k}
If $d$ and $b$ are positive integers and $\eps\in\big(0,\frac{1}{b}\big)$, then
\begin{equation*}%\label{eq:Nk}
	N(\eps,d) \geq b\cdot N(b\cdot\eps,d).
\end{equation*}
\end{lemma}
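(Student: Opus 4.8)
The plan is to prove the contrapositive-style direction by taking a good set for $\eps$ and extracting from it a good set for $b\eps$ on a coordinate slab. Let $X \subset [0,1]^d$ be any set with $|X| = N(\eps,d)$ and $\disp(X) \le \eps$. I would partition the first coordinate axis into the $b$ slabs $S_m := \{x \in [0,1]^d : \frac{m-1}{b} < (x)_1 < \frac{m}{b}\}$ for $m \in [b]$. By pigeonhole, at least one slab, say $S_{m_0}$, contains at most $|X|/b = N(\eps,d)/b$ points of $X$.

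The key step is to rescale this slab back to the unit cube and control the dispersion. Define the affine map $\Phi \colon S_{m_0} \to [0,1]^d$ that stretches the first coordinate by a factor of $b$ (sending $(\frac{m_0-1}{b}, \frac{m_0}{b})$ onto $(0,1)$) and leaves the other coordinates fixed, and let $Y := \Phi(X \cap S_{m_0})$. I would then argue that $\disp(Y) \le b\eps$: if $B = \prod_i (a_i,b_i) \subseteq [0,1]^d$ is any box avoiding $Y$, its preimage $\Phi^{-1}(B)$ is a box inside $S_{m_0}$ that avoids $X$, and whose volume is exactly $\vol(B)/b$ since only the first coordinate was contracted. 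Because $\disp(X) \le \eps$, we get $\vol(\Phi^{-1}(B)) \le \eps$, hence $\vol(B) \le b\eps$, giving $\disp(Y) \le b\eps$. (The condition $\eps < 1/b$ guarantees $b\eps < 1$, so this bound is meaningful.)

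Now $Y$ is a set in $[0,1]^d$ with dispersion at most $b\eps$, so by the definition of $N$ in~\eqref{eq:def_N} we must have $|Y| \ge N(b\eps,d)$. On the other hand, $\Phi$ is injective, so $|Y| = |X \cap S_{m_0}| \le N(\eps,d)/b$. Combining the two inequalities yields $N(b\eps,d) \le N(\eps,d)/b$, which rearranges to the desired $N(\eps,d) \ge b \cdot N(b\eps,d)$.

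\textbf{Main obstacle.} The delicate point is the dispersion inequality for $Y$, specifically verifying that \emph{every} empty box for $Y$ lifts to an empty box for $X$ contained in the slab $S_{m_0}$, with the volume scaling by exactly $1/b$. One must be careful that a box for $Y$ with first-coordinate interval $(a_1,b_1) \subseteq (0,1)$ pulls back to the interval $(\frac{m_0-1}{b} + \frac{a_1}{b}, \frac{m_0-1}{b} + \frac{b_1}{b}) \subseteq (\frac{m_0-1}{b}, \frac{m_0}{b})$, which indeed lies inside the slab and has length $(b_1-a_1)/b$; the remaining coordinates are unchanged, so the total volume is scaled by $1/b$ as claimed. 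Since this pulled-back box is disjoint from $X \cap S_{m_0}$ and lies entirely within the open slab $S_{m_0}$, it is in fact disjoint from all of $X$, which is what allows the bound $\disp(X) \le \eps$ to be applied. Everything else is bookkeeping.
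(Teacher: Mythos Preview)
Your proof is correct and follows essentially the same approach as the paper: partition the cube into $b$ slabs along the first coordinate and use the rescaling map $x_1 \mapsto b\,x_1$ to transfer between $\eps$-boxes in a slab and $b\eps$-boxes in the full cube. The only cosmetic difference is that the paper phrases the slab argument as an additive inequality $N(\eps,d,\Omega) \ge \sum_i N(\eps,d,\Omega_i)$ (each slab individually needs at least $N(b\eps,d)$ points), whereas you extract a single lightest slab via averaging; the content is the same.
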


\begin{proof}
Let $\Omega:=[0,1]^d$ be the unit cube.
For a positive integer $n\in\Nat$ and an arbitrary box $A\subset\Omega$ we define in analogy to \eqref{eq:def_disp_nd} the minimal dispersion relative to $A$ as
\begin{equation*}
	\disp^*(n,d,A) := \inf_{\substack{X\subset A\\|X|=n}} \ \sup_{\substack{B:B\subset A \\ B\cap X=\emptyset}} \added{\vol(B)}\text{\deleted{$|B|$}}\,,
\end{equation*}
and, similarly to \eqref{eq:def_N}, its inverse function
\begin{equation}\label{eq:def_Nr}
	N(\eps,d,A) := \min\{n\in\Nat: \disp^*(n,d,A)\le \varepsilon\}.
\end{equation}
Note that $N(\eps,d,\Omega)=N(\eps,d)$.

Next, \added{for $i\in\{1,\dots,b\}$} we define $\Omega_i=(\frac{i-1}{b},\frac{i}{b})\times (0,1)\times\dots\times (0,1)\subset \Omega$. 
Observe that, up to a set of zero measure, $\left(\Omega_1,\dots,\Omega_b\right)$ is a partition of $\Omega$ into $b$ disjoint boxes of equal volume.
By \eqref{eq:def_Nr}, we need at least $N(\eps,d,\Omega_i)$ points to intersect every box of volume $\varepsilon>0$ within $\Omega_i$ for each $i\in\{1,\dots,b\}$.
Therefore, for the entire cube, it holds
\begin{equation}\label{eq:N_add}
N(\eps,d,\Omega) \geq N(\eps,d,\Omega_1) + \dots + N(\eps,d,\Omega_b) = b \cdot N(\eps,d,\Omega_1).
\end{equation}
Finally, consider the mapping $(x_1,x_2,\dots,x_d)\to (b\cdot x_1,x_2,\dots,x_d)$.
Clearly, it maps $\Omega_1$ onto $\Omega$, preserves the cardinality of subsets and transforms boxes in $\Omega_1$ to boxes in~$\Omega$ so that their volume enlarges exactly $b$-times.
Therefore, $N(\eps,d,\Omega_1) = N(b\cdot \eps,d,b\cdot \Omega_1) = N(b\cdot \eps,d,\Omega)$, which, together with \eqref{eq:N_add}, finishes the proof.
\end{proof}

Lemma~\ref{lem:k} serves as a tool for extending the validity of the lower bound in Part (i) of Theorem~\ref{thm:k_gen} to the regime of $\eps$ in Part (ii).
Indeed, for the $k$-th bound of Part (ii), we apply the lemma to move in the range of parameters,
where the $k$-th bound of Proposition~\ref{prop:part1} applies.

\begin{prop}\label{prop:part2}
Fix a positive integer $k$.
If $d$ is a positive integer satisfying $d\ge d^{\frac{k}{k+1}}+k$ and $\eps \in \left(0, d^{-\frac{k^2}{k+1}}\right)$, then
\begin{equation*}%\label{eq:k_ext}
N(\eps,d)\ge \frac{1}{64ek^k(k+1)} \cdot \frac{d^{\frac{k}{k+1}}}{\eps}
\,.
\end{equation*}
\end{prop}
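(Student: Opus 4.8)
The plan is to deduce Proposition~\ref{prop:part2} from Proposition~\ref{prop:part1} via the rescaling Lemma~\ref{lem:k}: Part~(i) already handles $\eps$ in the window $\left[d^{-\frac{k^2}{k+1}},\,2^{-k-2}\right)$, so I would multiply $\eps$ by a suitable positive integer $b$ so that $b\eps$ falls inside that window, apply Part~(i) to $b\eps$, and then pay the factor $b$ coming from Lemma~\ref{lem:k} to return to $\eps$. The point of choosing $b\eps \approx d^{-\frac{k^2}{k+1}}$ (rather than, say, $b\eps$ a fixed constant) is that this is precisely the threshold at which the bound of Part~(i) is of size $\approx d^{k/(k+1)}/\eps$, which is exactly the target.

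Concretely, writing $\delta := d^{-\frac{k^2}{k+1}}$, I would set $b := \left\lceil \delta/\eps \right\rceil$. Since $\eps < \delta$, this gives $b \ge 2$ and, because $\delta/\eps > 1$, also $\delta \le b\eps < 2\delta$. If $d$ is large enough that $2\delta \le 2^{-k-2}$, then $b\eps$ lies in $\left[d^{-\frac{k^2}{k+1}},\,2^{-k-2}\right)$ and in particular $b\eps < 1$, so $\eps \in (0,1/b)$ and Lemma~\ref{lem:k} yields $N(\eps,d) \ge b\cdot N(b\eps,d)$. Applying Proposition~\ref{prop:part1} to $b\eps$ (its hypothesis $d \ge d^{\frac{k}{k+1}}+k$ being exactly the one we have) gives $N(b\eps,d) \ge \frac{1}{16ek^k(k+1)}(b\eps)^{-\frac{k+1}{k}}$. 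Multiplying, the powers of $b$ collapse to $b^{-1/k}$, leaving $N(\eps,d) \ge \frac{(b\eps)^{-1/k}}{16ek^k(k+1)\,\eps}$; now $b\eps < 2\delta$ together with $\delta^{-1/k} = d^{\frac{k}{k+1}}$ and $2^{-1/k} \ge \tfrac14$ gives the claimed $\frac{d^{k/(k+1)}}{64ek^k(k+1)\,\eps}$ (with room to spare — this is presumably why the stated constant is $64$ and not $32$).

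The one remaining loose end is the dimensions excluded above, namely those $d$ with $2\delta > 2^{-k-2}$; there are only finitely many such $d$ for each fixed $k$ (they satisfy $d < 2^{(k+3)(k+1)/k^2}$), and for them $d^{k/(k+1)}$ is bounded by a constant depending only on $k$. For these it suffices to invoke any crude universal lower bound of the form $N(\eps,d) \gtrsim 1/\eps$: e.g. the elementary $\disp^*(n,d) \ge \tfrac{1}{n+1}$ gives $N(\eps,d) \ge \tfrac1\eps - 1 \ge \tfrac{1}{2\eps}$ for $\eps \le \tfrac12$, or \eqref{eq:AHR} combined with the monotonicity $N(\eps,d) \ge N(\eps,2)$ (obtained by projecting a configuration onto two coordinates, which does not increase dispersion) gives $N(\eps,d) \ge \tfrac{1}{8\eps}$ for $\eps < \tfrac14$; one then just checks that the small constant $\tfrac{1}{64ek^k(k+1)}$ makes the inequality hold over this bounded range of $d$. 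I expect the only thing requiring care is the bookkeeping around the endpoint $2^{-k-2}$ — keeping $b\eps$ strictly below it and disposing of the small-$d$ exceptional range cleanly — rather than any substantive difficulty.
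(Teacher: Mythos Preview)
Your approach is essentially identical to the paper's: both set $b=\lceil d^{-k^2/(k+1)}/\eps\rceil$, verify $\delta\le b\eps<2\delta$, apply Lemma~\ref{lem:k} followed by Proposition~\ref{prop:part1}, and simplify using $(2\delta)^{-1/k}$ versus $d^{k/(k+1)}$. Your write-up is in fact slightly more careful than the paper's, which simply asserts that $b\eps$ lies in the range where Proposition~\ref{prop:part1} applies without separately disposing of the finitely many small $d$ for which $2\delta\ge 2^{-k-2}$; your observation that the elementary bound $N(\eps,d)\ge 1/\eps-1$ already covers those cases (since there $d^{k/(k+1)}$ is bounded by a $k$-dependent constant and the target constant $1/(64ek^k(k+1))$ is tiny) is exactly the right patch.
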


\begin{proof}
Set $b:=\left\lceil d^{-\frac{k^2}{k+1}} \cdot \eps^{-1} \right\rceil$.
Note that this choice ensures that \[d^{-\frac{k^2}{k+1}}\le b\eps\le d^{-\frac{k^2}{k+1}}+\eps\le 2d^{-\frac{k^2}{k+1}}\,.\]
Therefore, Lemma \ref{lem:k} and Proposition~\ref{prop:part1} readily yield that
\begin{align*}
N(\eps,d) &\ge b\cdot N(b\cdot\eps,d) \ge \frac{1}{16ek^k(k+1)} \cdot d^{-\frac{k^2}{k+1}}\eps^{-1} \cdot (b \eps)^{-\frac{k+1}{k}} \\
&\ge \frac{1}{64ek^k(k+1)} \cdot \frac{d^{\frac{k}{k+1}}}{\eps}
\,,
\end{align*}
and this finishes the proof.
\end{proof}

\end{document}